\documentclass[a4paper,16pt,,reqno]{amsart}
\usepackage{amsmath, amsthm, amscd, amsfonts, amssymb, graphicx, color}
\usepackage[bookmarksnumbered, colorlinks, plainpages]{hyperref}
\usepackage{wrapfig}
\usepackage{subfig}
\usepackage{subfig}
\usepackage{cite}
\usepackage[framemethod=default]{mdframed}
\usepackage{showexpl}







\begin{document}
	
	 \newcommand{\be}{\begin{equation}}
	 \newcommand{\ee}{\end{equation}}
	 \newcommand{\bt}{\beta}
	 \newcommand{\al}{\alpha}
	 \newcommand{\laa}{\lambda_\alpha}
	 \newcommand{\lab}{\lambda_\beta}
	 \newcommand{\no}{|\Omega|}
	 \newcommand{\nd}{|D|}
	 \newcommand{\Om}{\Omega}
	 \newcommand{\h}{H^1_0(\Omega)}
	 \newcommand{\lt}{L^2(\Omega)}
	 \newcommand{\la}{\lambda}
	 \newcommand{\ro}{\varrho}
	 \newcommand{\cd}{\chi_{D}}
	 \newcommand{\cdc}{\chi_{D^c}}
	 \newtheorem{thm}{Theorem}[section]
	 \newtheorem{cor}[thm]{Corollary}
	 \newtheorem{lem}[thm]{Lemma}
	 \newtheorem{prop}[thm]{Proposition}
	 \theoremstyle{definition}
	 \newtheorem{defn}{Definition}[section]
	 \newtheorem{exam}{Example}[section]
     \newtheorem{remark}{Remark}
	 \theoremstyle{remark}
	 \newtheorem{rem}{Remark}[section]
	 \numberwithin{equation}{section}
	 \renewcommand{\theequation}{\thesection.\arabic{equation}}
	 \numberwithin{equation}{section}
	 %
	 %
	 %
	\title[Uniqueness  result for long range spatially  segregation elliptic   system ]{Uniqueness  result for long range spatially  segregation elliptic   system }
	\author[Bozorgnia]{Farid Bozorgnia }
	 \address{Department of Mathematics, Instituto Superior T\'{e}cnico, Lisbon.} \email{bozorg@math.ist.utl.pt}


	 \thanks{ \textbf{Keywords}: Spatial Segregation, Reaction- Diffusion systems,  Free boundary problems. \\
	 2010 MSC:  35R35, 92D25, 35B50.\\
   F. Bozorgnia  was  partially   supported by the UT Austin-Portugal partnership through the FCT post-doctoral fellowship
	 	SFRH/BPD/33962/2009}
	
	 \begin{abstract}
	 	We study    a class of elliptic  competition-diffusion
systems of long range segregation models  for two and more competing species.  We  prove the  uniqueness result  for  positive solution  of  those elliptic and  related  parabolic systems when the coupling  in the right hand side involves  a   non-local  term  of integral form.

 Moreover,    alternate proofs of some  known
results, such as existence of solutions in the elliptic case and  the limiting configuration are given.   The  free boundary condition in a particular setting is given.

	 \end{abstract}

	 \maketitle

\section{Introduction and problem setting}

 One of the important problems  in population ecology is modeling of  competition and  interactions between biological components.
 To achieve this aim, different models based on reaction-diffusion equations are studied.  For  spatial segregation, two  following  models  have  been studied:

\begin{itemize}
\item   adjacent segregation:  in this model particles interact  on contact, and there is  a common  curve or hyper-surface of separation; free boundary;
\item   segregation at distance: species  interact  at a distance from each other. In this model,
the annihilation of the  coefficient for one component   at the point $x$     involves    the  values of  the rest of components  in a full neighborhood  of the point $x.$
\end{itemize}

 The adjacent segregation model and strongly competing systems  have  been extensively studied  from different point  of views,  we will explain briefly these perspective in the coming section,    see   \cite{CKL, CL, CTV1,  D,DHM,EY, K,KZ,Q} and references therein.  The model describes  the steady state   of $m$ competing species coexisting in the same area $\Omega.$
Let $u_{i}(x)$ denote the population density of the $i^{\textrm{th}}$ component with the internal dynamic prescribed by
$f_{i}(x,u_i).$  Then, the interaction between components is described by  the  following system of $m$
differential equations

\begin{equation}\label{s1}
\left \{
\begin{array}{lll}
-\Delta  u_{i}^{\varepsilon}= f_{i}(x,  u_{i}^{\varepsilon} ) - \frac{ 1 }{\varepsilon}  u_{i}^{\varepsilon} \sum\limits_{j \neq i}   (u_{j}^{\varepsilon})^{\beta} (x) & \text{ in  } \Omega,\\
u_{i}(x) =\phi_{i}(x)    &   \text{ on   } \partial \Omega,\\
i=1,\cdots, m.
 \end{array}
\right.
\end{equation}
Here $\phi_{i}$ are non-negative  $C^{1,\alpha}$  functions with disjoint supports that is, $\phi_{i} \cdot \phi_{j}  =0,$
 on  the boundary. In the  system (\ref{s1}), the parameter   $\beta$  can be chosen   $\beta=1$ or $2$ which for the case $\beta=2,$ the system is in variational form.

 To explain the second   model, first  we indicate  some of  the notations that  we are dealing with  in this paper.

\begin{itemize}
\item $\Omega \subset \mathbb{R}^d,$  is  bounded  domain with  $C^{1, \alpha}$    boundary;
   \item  $d(x,\partial \Omega)$ denotes the distance of the point $x$ to  $\partial \Omega;$
 \item for a given $ D  \subset \mathbb{R}^d,$ we define  $(D)_1:={\{  x \in \mathbb{R}^d : \,  d(x,  D)\le 1}\};$
 \item $ (\partial \Omega)_1:={\{  x \in \Omega ^{c}: \,  d(x,\partial \Omega)\le 1}\},$
\item  supp f :   the support of function $f;$
  \item $B_{r}(x)={\{ y \in \mathbb{R}^n: |x-y|<  r }\};$
  \item   $W^{+}=\max(W,0)$  and   $W^{-}=\max(-W,0).$
\end{itemize}
 In this work,  we  consider the following elliptic   system studied in \cite{CPQ}:

\begin{equation}\label{s2}
\left \{
\begin{array}{lllll}
\Delta  u_{i}^{\varepsilon}=\frac{ 1 }{\varepsilon}  u_{i}^{\varepsilon} \sum\limits_{j \neq i } H( u_{j}^{\varepsilon})(x) & \text{ in  } \Omega,\\
u_{i}^{\varepsilon}(x) =\phi_{i}(x)    &   \text{ on   } (\partial \Omega)_1,\\
 \end{array}
\right.
\end{equation}
where
 \begin{equation}\label{nnew1}
H(  u_{j}^{\varepsilon})(x)=\int_{B_{1}(x)} u_{j}^{\varepsilon}(y) dy,
\end{equation}
or
\begin{equation}\label{nnew2}
  H(  u_{j}^{\varepsilon})(x)= \underset{ y\in B_{1}(x)}{\sup}  u(y).
\end{equation}
Here,  the boundary data  $\phi_{i}$  for $i=1,\cdots ,m$ are non-negative, $C^{1, \alpha}$  functions  defined
on $ (\partial \Omega)_1$  with supports at distance, at least  one   from each other, i.e,
\[
(\text{supp} \, \phi_{i}  )_{1} \cap ( \text{supp} \, \phi_{j} )^{\circ}= \emptyset,\quad  \text{for} \, i\neq j.
\]
System (\ref{s2}) can also  be viewed as steady state of   the  following parabolic system
\begin{equation}\label{s3}
\left \{
\begin{array}{lll}
\frac{\partial u_{i}^{\varepsilon}}{\partial t}  -\Delta  u_{i}^{\varepsilon}=  - \frac{ 1 }{\varepsilon}
 u_{i}^{\varepsilon} \sum\limits_{j \neq i } H(  u_{j}^{\varepsilon})(x) & \text{ in  }
 Q:=\Omega\times(0,+\infty),\\
u_{i}^{\varepsilon}(x,t) =\phi_{i}(x,t)    &   \text{ on   } (\partial \Omega)_1,\\
u_{i}^{\varepsilon}(x,0) =u_{i,0}   & \text{in } \Omega \times
{\{t=0}\}.
 \end{array}
\right.
\end{equation}

The main contribution of this work is to provide  uniqueness
results  for system  \eqref{s2}  (Lemma 3.2)  and  system \eqref{s3} when $H$ is given by (\ref{nnew1}). Moreover,  we  provide alternate proof of known results, such as existence of solutions in the elliptic case with right hand side   given by (\ref{nnew1}).  Also  we show that as the competition rate goes to infinity the solution converges, along with suitable sequences,
to a spatially  long range segregated state (Lemma 4.5),   more deep results about properties of  limiting configuration can be found in \cite{CPQ}.

The outline of this paper is as follows.   In  Section 2  we  provide mathematical background and known
    results about  the systems  \eqref{s1} and  \eqref{s2}.  Section 3 deals with existence and uniqueness for systems  (\ref{s2}) and    (\ref{s3}) where $H$ is given by (\ref{nnew1}).  Section 4  consists  analysis of the system   (\ref{s2})  in the limiting case as $\varepsilon$ tends to zero when $m=2$.

\section{Basic facts}

In this section we review some  of  known results and  mathematical background  for two systems (\ref{s1}) and (\ref{s2}). The analysis of  the system (\ref{s2}) is much more difficult compare with system (\ref{s1}). Understanding the properties of the system (\ref{s1}) gives some insights in the  study of system  (\ref{s2}).  Roughly speaking,    the system (\ref{s2})  can be reduced to  the system (\ref{s1}) if in the term
  \[
H(  u_{j}^{\varepsilon})(x)=\underset{ y\in B_{1}(x)}{\sup}  u(y),
\]
  instead of unit ball,  we   consider the ball with radius $r$  where $r$ tends to zero, then
  \[
      H(  u_{j}^{\varepsilon})(x)=  u_{j}^{\varepsilon}(x)
      \]
       so it is possible to see some general behaviour in the system  (\ref{s2}).
\subsection{Known results  for  the first model}
  As  we already mentioned, the  system (\ref{s1}) has been studied well. First,  the existence of solution for each $\varepsilon$  is shown in \cite{CTV1,K,KZ} i.e,     for each $ \varepsilon $ the   system (\ref{s1}) admits a solution $(u_{1}^{\varepsilon},...,u_{m}^{\varepsilon}) \in (H^{1}(\Omega))^{m}.$ Moreover,  it is  also shown that  for each $\varepsilon $  the normal derivative of  $ u_{i}^{\varepsilon} $  is bounded independent of $\varepsilon $ which implies that  there exists  $(u_{1},...,u_{m}) \in (H^{1}(\Omega))^{m}$ such that up to subsequences,   we have the strong convergence of   $ u_{i}^{\varepsilon}  $ to $u_{i} $ in   $H^{1}(\Omega),$  and $u_{i}\cdot u_{j}=0$ for $i\neq j.$ For fixed $ \varepsilon $ uniqueness of elliptic system  (\ref{s1}) for $ f_{i}  \equiv  0$,  $ \beta=1$     and parabolic system have
  been shown in  \cite{KZ}.  In \cite{CTV2} for a class of segregation state governed by a variational principle, existence of solutions is shown and also the conditions that  provide the uniqueness are given. To see uniqueness result for limiting case when $ \varepsilon$ tends to zero, see \cite{AB,KZ}.
   We refer to \cite{FB}  to  see numerical approximation of the system (\ref{s1})  for the  limiting case as  $\varepsilon $ tends to zero.

     Another observed  result in \cite{CTV1},  is that for regular  points  on the interface separating the support of  $u_{i} $ and $u_{j} $  the following holds
\begin{equation}\label{sun1}
\underset {u_{i}(y) > 0} {\underset{  y\rightarrow x} { \lim}} \, \nabla u_{i}(y)=-  \underset {u_{j}(y) > 0} {\underset{ y\rightarrow x} { \lim}} \,  \nabla u_{j}(y).
\end{equation}
%
The limiting  solutions of (\ref{s1})  share the following properties  and belong to class $S$ in below,     \cite{CTV1}
\begin{multline*}
\textit{S}=\left\{U=(u_{1},...,u_{m})\in H^{1}(\Omega):u_{i}\geq0,u_{i} \cdot  u_{j}=0  \text{ if }   i\neq j, \right.\\
\left. u_{i}=\phi_{i}  \text{ on }    \partial \Omega , \, -\triangle u_{i}\leq 0, \, -\Delta\left(u_{i}-\sum _{j\neq i}u_{j}\right)\geq 0 \right\}.
\end{multline*}

\begin{remark}
 In  system  (\ref{s1})  when $\varepsilon \rightarrow 0 $  the system in variational  form,  i.e.,  $\alpha =2$ has same solution as  the system with $ \alpha=1.$
 \end{remark}

In the case of two components i.e., $m=2$   the explicit solution can be obtained as following.  Note that in this case the difference of two functions,  $u_{1}^{\varepsilon}-u_{2}^{\varepsilon}, $ is harmonic for each   $\varepsilon.$  Let $W$  be the harmonic extension on $\Omega$ of the boundary data $\phi_{1}-\phi_{2} $. If  we set $u_{1}=W^{+},$  $ u_{2}=W^{-}$, then  the pair $(u_{1},u_{2})$ is the limit configuration of any sequences of pairs $(u_{1}^{\varepsilon},\, u_{2}^{\varepsilon})$, and  there  exists $C\geq0$ such that, (see \cite{CTV1})
\begin{equation}\label{sunshin1}
 (\frac{1}{\varepsilon})^{1/6}.\parallel u_{i}^{\varepsilon}-u_{i}\parallel_{H^{1}_{0} (\Omega)}\leq C \ \text{ as } \ \varepsilon\rightarrow 0.
\end{equation}

 Recently in  \cite{STT}, the   regularity issues for system of strongly competing Schro\"{o}dinger equation  with nontrivial grouping has been studied. The   $C^{0, \alpha} $ estimate that are uniform in competition parameter, also the regularity of free boundary as competition rate tends to infinity, are obtained, we refer to \cite{NTTV} for more  related work.


\subsection{Long  range segregated model}
Now,  we turn  our attention  to the second system given by (\ref{s2}). System  (\ref{s2}) is  in variational form if
\[
H(  u_{j}^{\varepsilon})(x)=\int_{B_{1}(x)}( u_{j}^{\varepsilon}(y))^2 dy.
\]
    \begin{remark}
  In system  (\ref{s1}),  the interaction between components is given by  the term $ u_{i}(x)  u_{j}(x);$ while in  (\ref{s2})
    components interacting by the nonlocal   term  $ u_{i}^{\varepsilon}  H( u_{j}^{\varepsilon})(x).$
    The analysis  and asymptotic behaviour of the   system  (\ref{s1}) are  more straightforward  than  system (\ref{s2}). For instance,
      if the number of components  $m=2$,  then   in   system (\ref{s1}) with $f_{i} =0, \beta=1$,  the difference $  u_{1}^{\varepsilon}-u_{2}^{\varepsilon}$ is harmonic for each  $\varepsilon$ while this is not true for  system  (\ref{s2}).
    \end{remark}
    %
In  \cite{CPQ}   rigorous analysis  is  done to show the following :
\begin{itemize}
\item There exist continuous functions $  u_{1}^{\varepsilon}, \cdots ,u_{m}^{\varepsilon} $  depending on the parameter  $ \varepsilon $ which solve the system  (\ref{s2}) in  viscosity sense.

\item
 As  $\varepsilon$ tends to zero, there  exists a subsequence  $u _{i}^{\varepsilon_{k}} $  converging locally uniformly,  to a function
$u_{i}$, satisfying the  properties that  the $ u_{i}$'s are
 locally Lipschitz continuous in $\Omega$
 and have supports at distance at least one  from each other.

\item  Each function $u_{i}$ is harmonic on its support. The authors show the semi convexity of the free boundary. For the points belonging  to free boundary,   there is an exterior tangent ball of radius one  at $x_0.$

 \item  The free boundary set has finite $(n - 1)$-dimensional Hausdorff  measure and free boundary
set is a set of finite perimeter.

\item They obtained sharp characterization of the interfaces, i.e,
the supports of the limit functions are at distance exactly  one from each other.

\item  Free boundary condition in any dimension for two components  is given  when $H$ is defined by  (\ref{nnew1}).

\end{itemize}

\section{Existence and uniqueness of the nonlocal segregation model}

Consider the following elliptic system
\begin{equation}\label{sy1}
\left \{
\begin{array}{lllll}
\Delta  u_{i}^{\varepsilon}=  \frac{ 1 }{\varepsilon}  u_{i}^{\varepsilon} \sum\limits_{j \neq i} \int_{B_{1}(x)} u_{j}^{\varepsilon}(y)\, dy    & \text{ in  } \Omega,\\
u_{i}(x) =\phi_{i}(x)    &   \text{ on   } (\partial \Omega)_1.\\
 \end{array}
\right.
\end{equation}
Existence of the solution for system  (\ref{sy1})   has been shown in
\cite{CPQ} by Schauder fixed point argument. The aim of this work is to cover the  lack of uniqueness  for solution  of (\ref{sy1}).  We  show  uniqueness   of  solution  for system (\ref{sy1})  inspired  by the proof of uniqueness for system (\ref{s1})  in   \cite{KZ}.  Since the proof is constructive
it  can be used for numerical simulation to approximate the
solution  of  $\varepsilon$ problem in (\ref{sy1}).
 \begin{lem}\label{sys1}
For each $\varepsilon >0, $ there exists a  positive solution
 $(u_{1}^{\varepsilon},\cdots  ,u_{m}^{\varepsilon})$ of System  (\ref{sy1}).

 \end{lem}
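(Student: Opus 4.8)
The plan is to prove existence of a positive solution to the nonlocal elliptic system \eqref{sy1} via an iterative monotone scheme, exploiting the sign structure of the coupling. The key observation is that for each fixed $\varepsilon$, if we freeze the competing densities $\{u_j^\varepsilon\}_{j\neq i}$ in the right-hand side, then each equation for $u_i^\varepsilon$ becomes linear in $u_i^\varepsilon$, of the form $\Delta u_i = c_i(x)\, u_i$ with a nonnegative potential
\[
c_i(x) = \frac{1}{\varepsilon}\sum_{j\neq i}\int_{B_1(x)} u_j(y)\,dy \ge 0.
\]
Such a linear Dirichlet problem has a unique solution by the maximum principle (the operator $-\Delta + c_i(x)$ with $c_i\ge 0$ is coercive on $H^1_0$), and by the weak maximum principle this solution is nonnegative since the boundary data $\phi_i\ge 0$ are nonnegative. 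This is the engine of the argument.

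First I would set up the iteration. Start from an initial guess, for instance the harmonic extensions $u_i^{(0)}$ of $\phi_i$ into $\Omega$ (which are nonnegative and bounded by $\max\phi_i$ by the maximum principle). Given the $n$-th iterates $(u_1^{(n)},\dots,u_m^{(n)})$, define $(u_1^{(n+1)},\dots,u_m^{(n+1)})$ by solving, for each $i$, the linear decoupled problem
\[
\left\{
\begin{array}{ll}
\Delta u_i^{(n+1)} = \dfrac{1}{\varepsilon}\, u_i^{(n+1)} \sum\limits_{j\neq i} \displaystyle\int_{B_1(x)} u_j^{(n)}(y)\,dy & \text{in } \Omega,\\[2mm]
u_i^{(n+1)} = \phi_i & \text{on } (\partial\Omega)_1.
\end{array}
\right.
\]
Each step is well posed and produces nonnegative iterates. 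The second step is to derive uniform a priori bounds: the maximum principle gives $0 \le u_i^{(n)} \le \max_{(\partial\Omega)_1}\phi_i =: M_i$ for every $n$, since $\Delta u_i^{(n+1)} = c_i^{(n)} u_i^{(n+1)} \ge 0$ shows $u_i^{(n+1)}$ is subharmonic and hence attains its maximum on the boundary. This uniform $L^\infty$ bound, together with standard elliptic $H^1$ and $W^{2,p}$ estimates applied to each linear equation (the potentials $c_i^{(n)}$ are uniformly bounded because the $u_j^{(n)}$ are uniformly bounded and $B_1(x)$ has fixed volume), yields uniform bounds on $\|u_i^{(n)}\|_{W^{2,p}}$ and hence compactness in $C^{1,\alpha}$.

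The third step is to pass to the limit. Using the compactness from the uniform estimates, I would extract a subsequence along which $(u_1^{(n)},\dots,u_m^{(n)})$ converges in $C^1$ (or strongly in $H^1$) to some limit $(u_1^\varepsilon,\dots,u_m^\varepsilon)$. The crucial point is that the nonlocal coupling term is continuous under this convergence: if $u_j^{(n)}\to u_j^\varepsilon$ uniformly, then $\int_{B_1(x)} u_j^{(n)}(y)\,dy \to \int_{B_1(x)} u_j^\varepsilon(y)\,dy$ uniformly in $x$ as well, so the limit satisfies \eqref{sy1}. I expect the main obstacle to be establishing actual convergence of the iteration rather than mere compactness, since the coupling is competitive rather than cooperative and the scheme need not be monotone in the usual lattice sense. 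To handle this cleanly, the alternative — which I would favor — is to bypass convergence of the specific iteration and instead invoke the Schauder fixed point theorem directly: define the solution map $T$ sending $(u_j)_{j}$ to the solution of the frozen linear system, show $T$ maps the convex compact set $\{0\le v_i\le M_i\}$ (in the $C^0$ topology, with compactness supplied by the $W^{2,p}$ estimates) into itself, and verify continuity of $T$, which again reduces to continuity of the nonlocal averaging operator $H$. Positivity of the resulting fixed point then follows from the strong maximum principle applied to each component, since $u_i^\varepsilon$ satisfies $\Delta u_i^\varepsilon = c_i^\varepsilon u_i^\varepsilon$ with $c_i^\varepsilon \ge 0$ and nontrivial nonnegative boundary data, so $u_i^\varepsilon > 0$ in $\Omega$.
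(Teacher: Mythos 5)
Your proposal is essentially correct as a proof of existence, but at the decisive moment it abandons the paper's route and falls back on the Schauder fixed point theorem --- which is precisely the argument already given in \cite{CPQ}, and which the paper explicitly sets out to replace with a constructive one. Your setup of the iteration (harmonic extensions $u_i^{(0)}$ as starting point, then freezing the competitors and solving the linear problem $\Delta u_i^{(n+1)} = c_i^{(n)} u_i^{(n+1)}$ with $c_i^{(n)}\ge 0$) coincides exactly with the paper's, and your a priori bounds and positivity argument are fine. The idea you miss is that the competitive sign of the coupling makes the one-step map \emph{order-reversing}: if the frozen data $u_j^{(n)}$ increase, the potential $c_i^{(n)}$ increases and the comparison principle forces $u_i^{(n+1)}$ to decrease. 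Hence the two-step map is order-preserving, and since $u_i^{(0)}$ (harmonic, zero right-hand side) dominates $u_i^{(1)}$, one gets the interlaced monotone chains $u_i^{0}\ge u_i^{2}\ge\cdots\ge u_i^{2k}\ge u_i^{2k+1}\ge\cdots\ge u_i^{1}$, so the even and odd subsequences converge monotonically to limits $\overline{u}_i\ge\underline{u}_i$ solving the coupled system \eqref{syss3} --- no compactness extraction or continuity-of-$T$ verification is needed. The genuinely nontrivial content of the paper's proof, entirely absent from your proposal, is the identification $\overline{u}_i\equiv\underline{u}_i$: comparing normal derivatives on $\partial\Omega$ (inequality \eqref{syss60}), integrating the equations over $\Omega$, and exploiting the symmetry of the kernel $K(x,y)=\chi_{B_1(0)}(x-y)$ to cancel the interior contributions, leaving only the boundary-layer terms \eqref{syss64}, which force equality near $\partial\Omega$ and then everywhere by continuation. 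What your Schauder route buys is brevity and no need for this kernel identity; what the paper's route buys is a constructive, numerically implementable scheme whose monotone bracketing $u_i^{2k+1}\le w_i\le u_i^{2k}$ is then reused verbatim to prove uniqueness in Lemma \ref{sysn1} --- a payoff your approach forfeits.
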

 \begin{proof}
 To start, consider the harmonic extension $u_{i}^{0}$ given by
 \begin{equation}\label{sys2}
 \left \{
 \begin{array}{llll}
   \Delta u_{i}^{0} = 0   & \text{ in  } \Omega,\\
   u_{i}^{0}  =\phi_{i}     &   \text{on   } (\Omega)_1\setminus\Omega.
  \end{array}
 \right.
 \end{equation}
  Now, given $ u_{i}^{k}$ consider the solution of the following linear   system
 \begin{equation}\label{sy4}
 \left \{
 \begin{array}{lllll}
 \Delta  u_{i}^{k+1}=  \frac{ 1 }{\varepsilon}  u_{i}^{k+1} \sum\limits_{j \neq i} H(  u_{j}^{k} ) (x) & \text{ in  } \Omega,\\
 u_{i}^{k+1}(x) =\phi_{i}(x)    &   \text{on   } (\Omega)_1\setminus\Omega.\\
  \end{array}
 \right.
 \end{equation}
 We show that the following inequalities hold:
 \[
 u_{i}^{0}\ge u_{i}^{2} \cdots \ge u_{i}^{2k}\ge \dots  \ge u_{i}^{2k+1}\ge \cdots \ge u^{3}_{i}\ge u_{i}^{1},  \quad \textrm{in} \, \Omega.
 \]
 Note that since $u_{i}^{0} \ge 0$ then
 \[
  \sum_{j \neq i} \int_{B_{1}(x)}   u_{j}^{0}(y) dy \ge 0,  \quad  x \in \Omega.
  \]
 The   boundary conditions  $\phi_{i}(x)$  are non negative so the weak maximum principle   implies  that $ u_{i}^{1}\ge 0$ and consequently
 \[
  u_{i}^{k}\ge 0, \quad \text{ for} \,  k\ge1, i=1, \cdots ,m.
  \]
 Now we have
  \begin{equation}\label{sy44}
  \left \{
  \begin{array}{ll}
  \Delta  u_{i}^{1}\ge 0 & \text{ in  } \Omega,\\
  u_{i}^{1}(x) =u_{i}^{0}(x)= \phi_{i}(x)    &   \text{ on   } \partial \Omega.\\
   \end{array}
  \right.
  \end{equation}
Thus  the comparison principle  implies that $ u_{i}^{1}\le  u_{i}^{0} $.  To proceed more with induction, assume that

  \begin{equation}\label{inq2}
 u_{i}^{0}\ge u_{i}^{2}\ge  \cdots \ge u_{i}^{2k}\ge  u_{i}^{2k+1}\ge \cdots \ge u^{3}_{i}\ge u_{i}^{1}.
  \end{equation}
 We show that
 \begin{equation}\label{2.1}
 u_{i}^{2k+2}\ge u_{i}^{2k+1}.
 \end{equation}
  By (\ref{sy4}) and the  assumption in  (\ref{inq2})  we have
  \[
  \Delta  u_{i}^{2k+2}\le  \frac{ 1 }{\varepsilon}  u_{i}^{2k+2} \sum_{j \neq i} H(  u_{j}^{2k} ) (x),
  \]
 \[
 \Delta  u_{i}^{2k+1}=  \frac{ 1 }{\varepsilon}  u_{i}^{2k+1} \sum_{j \neq i} H(  u_{j}^{2k} ) (x).
 \]
 Note that  $  u_{i}^{2k+1}  $ and $  u_{i}^{2k+2}$ have the same boundary value so \eqref{2.1} follows from the comparison
 principle.   The same argument  using the assumption  $u_{i}^{2k+1} \ge u_{i}^{2k-1}$   shows that
   \[
    u_{i}^{2k+2}\le u_{i}^{2k}.
    \]
   For the next step, we note that
  \begin{equation}\label{syss2}
  \left \{
  \begin{array}{llll}
   \Delta u_{i}^{2k+3} =  \frac{ 1 }{\varepsilon}  u_{i}^{2k+3} \sum\limits_{j \neq i} H(  u_{j}^{2k+2} ) (x)    & \text{ in  } \Omega,\\
   \Delta u_{i}^{2k+1} =  \frac{ 1 }{\varepsilon}  u_{i}^{2k+1} \sum\limits_{j \neq i} H(  u_{j}^{2k} ) (x)    & \text{ in  } \Omega.\\
       \end{array}
  \right.
  \end{equation}
 From previous step  we have $  u_{i}^{2k+2}\le u_{i}^{2k}$ which implies
 \[
    u_{i}^{2k+3}  \ge  u_{i}^{2k+1}.
    \]
   Now let $ \overline{u}_{i} $ and  $ \underline{u}_{i} $ be two families of functions such that
   \[
    u_{i}^{2k} \rightarrow   \overline{u}_{i} \quad  \textrm{uniformly in } \Omega,
    \]
     \[
        u_{i}^{2k+1} \rightarrow \underline{u}_{i} \quad  \textrm{uniformly in } \Omega.
        \]
Taking the limit in   (\ref{sy4}) yields

  \begin{equation}\label{syss3}
  \left \{
  \begin{array}{llll}
  \Delta \overline{u}_{i} =  \frac{ 1 }{\varepsilon}  \overline{u}_{i} \sum\limits_{j \neq i} H( \underline{u}_{j} ) (x)    & \text{ in  } \Omega,\\
 \Delta \underline{u}_{i} =  \frac{ 1 }{\varepsilon}  \underline{u}_{i} \sum\limits_{j \neq i} H( \overline{u}_{j} ) (x)    & \text{ in  } \Omega.
       \end{array}
  \right.
  \end{equation}
 The inequality $   u_{i}^{2k+1}\le u_{i}^{2k}$ implies that
 \begin{equation}\label{in1}
  \overline{u}_{i} \ge \underline{u}_{i}\quad \text{in} \quad \Omega.
 \end{equation}
   We will show that,  in fact,  the equality holds.  Since  $\overline{u}_{i}=\underline{u}_{i}$ on $\partial\Omega$, by \eqref{in1}, we have
  \begin{equation}\label{syss60}
     \frac{\partial  \overline{u}_{i} } {\partial n} \le \frac{\partial  \underline{u}_{i} } {\partial
     n},
     \end{equation}
where $n$ is the outward  normal vector of $\partial\Omega$. Hence
 \begin{equation}\label{in180}
\int_{\Omega}\sum_{i} \Delta \overline{u}_{i}(x)\, dx= \int_{\partial \Omega}\sum_{i}  \frac{\partial  \overline{u}_{i} } {\partial n} \, ds \le \int_{\partial \Omega}  \sum_{i}\frac{\partial  \underline{u}_{i} } {\partial n} \, ds= \int_{\Omega}\sum_{i} \Delta \underline{u}_{i}(x)\, dx.
 \end{equation}
Substituting  equation (\ref{syss3})   into (\ref{in180}), we obtain
 \begin{equation}\label{syss61}
\int_{\Omega} \sum_{{i,j}_{ j \neq i}}   \overline{u}_{i}(x) \left(
\int_{B_{1}(x)}  \underline{u}_{j}(y)\, dy \right) \, dx \le
\int_{\Omega} \sum_{{i,j}_{j \neq i}}     \underline{u}_{i}(x) \left(
\int_{B_{1}(x)} \overline{u}_{j}(y)\, dy\right) \, dx.
 \end{equation}
Rewriting this,  we get a symmetric kernel $K(x, y);$  such that
 \begin{equation}\label{syss62}
\int_{\Omega} \int_{\Omega_1} \sum_{{i,j}_{j \neq i}}   \overline{u}_{i}(x)   \underline{u}_{j}(y)  K(x,y) \, dy dx  \le \int_{\Omega} \int_{\Omega_1}  \sum_{{i,j}_{j \neq i}}     \underline{u}_{j}(x)   \overline{u}_{i}(y) K(x,y) \, dy \, dx,
 \end{equation}
where $K(x,y)$  is    $\chi_{B_1(0)}(x-y)$ with $\chi_{B_1(0)}$
the characteristic function of the unit ball centered at the origin.  Since  $K $ is symmetric in $x$  and $y,$
 \begin{equation}\label{syss63}
\int_{\Omega} \int_{\Omega} \sum_{{i,j}_{j \neq i}}   \overline{u}_{i}(x)   \underline{u}_{j}(y)  K(x,y) \, dy dx  = \int_{\Omega} \int_{\Omega}
 \sum_{{i,j}_{j \neq i}}     \underline{u}_{j}(x)   \overline{u}_{i}(y) K(x,y) \, dy \, dx.
 \end{equation}
The remaining part is
 \begin{equation}\label{syss64}
\int_{\Omega} \int_{\Omega_{1}\backslash \Omega} \sum_{{i,j}_{j \neq i}}   \overline{u}_{i}(x)   \underline{u}_{j}(y)  K(x,y) \, dy dx  = \int_{\Omega} \int_{\Omega_{1}\backslash \Omega}  \sum_{{i,j}_{j \neq i}}     \overline{u}_{i}(x)\phi_{j}(y)    K(x,y) \, dy \, dx
 \end{equation}
\[
\ge \int_{\Omega} \int_{\Omega_{1}\backslash \Omega} \sum_{{i,j}_{j \neq i}} \underline{u}_{i}(x) \phi_{j}(y)   K(x,y) \, dy dx  = \int_{\Omega} \int_{\Omega_{1}\backslash \Omega}  \sum_{{i,j}_{j \neq i}}    \underline{u}_{j}(x)   \overline{u}_{i}(y)    K(x,y) \, dy \, dx.
\]
Combining (\ref{syss62})-(\ref{syss64})  we obtain
\begin{equation}\label{new1}
\int_\Omega\int_{\Omega_1\setminus\Omega}\sum_{{i,j}_{j \neq i}}\overline{u}_{i}(x)\phi_j(y)K(x,y)dydx=\int_\Omega\int_{\Omega_1\setminus\Omega}\sum_{{i,j}_{j \neq i}} \underline{u}_{i}(x)\phi_j(y)K(x,y)\,dy\,dx.
\end{equation}
Now from  (\ref{new1})  we obtain
\[
  \overline{u}_{i}(x)= \underline{u}_{i}(x) \quad \text{in} {\{ x\in \Omega : dist(x, \partial \Omega)\le 1}\}.
  \]
  This  follows from facts that $\overline{u}_{i} \ge   \underline{u}_{i}$ and non negativity of boundary data  $ \phi_{j} $  and   definition of kernel $K(x,y).$  In  view of  (\ref{syss63}) and the  continuation argument we obtain
\[
 \overline{u}_{i}\equiv  \underline{u}_{i}, \quad \text{ in }
 \Omega,
\]
which is a solution of \eqref{sy1}.
\end{proof}

\begin{lem}\label{sysn1}(Uniqueness)
Assume there exists  another  positive solution $(w_1,\cdots,w_m)$ of  (\ref{sy1}), then
\[
u_{i}=w_{i}.
\]
\end{lem}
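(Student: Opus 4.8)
The plan is to show that the monotone iteration scheme constructed in Lemma~\ref{sys1} produces a solution that dominates (or is dominated by) any other positive solution, so that the squeeze between the even and odd iterates forces equality. Concretely, suppose $(w_1,\dots,w_m)$ is another positive solution of \eqref{sy1}. First I would compare $(w_1,\dots,w_m)$ with the constructed iterates $u_i^k$ by induction. Since $w_i=\phi_i=u_i^0$ on $(\Omega)_1\setminus\Omega$ and $\Delta w_i\ge 0$ in $\Omega$ (because the right-hand side of \eqref{sy1} is nonnegative), the comparison principle gives $w_i\le u_i^0$. I then propagate this through the iteration: assuming $u_i^{2k+1}\le w_i\le u_i^{2k}$, I would use the equations for $u_i^{2k+1},u_i^{2k+2}$ and $w_i$ together with the monotonicity of $H$ (if $u_j^{2k}\ge w_j$ then $H(u_j^{2k})\ge H(w_j)$, since $H$ in \eqref{nnew1} is an integral of a nonnegative integrand) to sandwich $w_i$ between consecutive iterates. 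The comparison principle applies at each stage because all the functions share the same boundary data $\phi_i$ and differ only through the sign of the zeroth-order coefficient multiplying them.

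The key mechanism is that the iteration scheme is \emph{order-preserving} in exactly the alternating way recorded in \eqref{inq2}: the even iterates decrease and the odd iterates increase, and I expect any genuine solution to be trapped strictly between them. Passing to the limit $k\to\infty$ and using the uniform convergence $u_i^{2k}\to\overline{u}_i$, $u_i^{2k+1}\to\underline{u}_i$ established in Lemma~\ref{sys1}, the sandwich $\underline{u}_i\le w_i\le\overline{u}_i$ emerges. But Lemma~\ref{sys1} already proved $\overline{u}_i\equiv\underline{u}_i$ in $\Omega$, and this common value is the constructed solution $u_i$. Therefore $w_i=u_i$, which is precisely the claim.

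The main obstacle I anticipate is justifying the inductive comparison step cleanly, because the monotonicity of the map $u\mapsto H(u)$ interacts with the sign alternation: one must be careful that increasing $u_j^{2k}$ relative to $w_j$ produces a coefficient inequality pointing in the direction needed for the comparison principle on $u_i^{2k+1}$ versus $w_i$, and then the \emph{reverse} inequality at the next half-step. The comparison principle I would invoke is the one for equations of the form $\Delta v = c(x)\,v$ with $c\ge 0$: if $\Delta v_1\ge c\,v_1$, $\Delta v_2\le c\,v_2$ with $v_1\le v_2$ on the boundary, then $v_1\le v_2$ throughout, where here $c(x)=\frac{1}{\varepsilon}\sum_{j\neq i}H(\cdot)(x)$ is evaluated at the \emph{same} argument for both functions being compared. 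Keeping the coefficient argument fixed across each single comparison (rather than letting it float between $w_j$ and $u_j^k$) is the technical point that makes the induction go through, and I would set up each comparison so that this holds.
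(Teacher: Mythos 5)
Your proposal is correct and follows essentially the same route as the paper: sandwich $w_i$ between the alternating iterates via $u_i^{2k+1}\le w_i\le u_i^{2k}$, proved inductively with the comparison principle applied with the zeroth-order coefficient frozen at a single argument, then pass to the limit and invoke $\overline{u}_i\equiv\underline{u}_i$ from Lemma~\ref{sys1}. The technical point you flag about keeping the coefficient fixed across each comparison is exactly how the paper's induction step is carried out.
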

\begin{proof}
We will prove that the following hold:
 \begin{equation}\label{ineq1}
u_{i}^{2k+1} \le w_{i}\le u_{i}^{2k}, \quad  \textrm{ for } \,    k \ge 0.
 \end{equation}
To begin, we show that
 \begin{equation}\label{ineq20}
w_{i}\le u_{i}^{0}.
\end{equation}
This is a consequence of the fact that   $w_{i}$ satisfies
 \begin{equation*}
  \left \{
  \begin{array}{llll}
   \Delta w_{i}\ge 0    & \text{ in  } \Omega,\\
     w_{i}=u^{0}_{i}   & \text{on   } \partial \Omega.
       \end{array}
  \right.
  \end{equation*}
Next we compare $w_{i}$  with $u_{i}^1$ and we show $ w_{i} \ge u_{i}^1$.  This  inequality follows from (\ref{ineq20}) and

 \begin{equation*}
  \left \{
  \begin{array}{ll}
   \Delta w_{i} =\frac{w_i}{\varepsilon}  \sum\limits_{j \neq i}  \int_{B_{1}(x)} w_j   & \text{ in  } \Omega,\\
    \Delta u_{i}^1   =\frac{u_{i}^{1}}{\varepsilon}  \sum\limits_{j \neq i}  \int_{B_{1}(x)} u_{j}^{0}   & \text{ in  } \Omega.
       \end{array}
  \right.
  \end{equation*}
 Now we proceed by induction and we assume that the claim is true until $2k+1.$  This means that we have
\[
u_{i}^{2k+1}\le w_{i} \le u_{i}^{2k}.
\]
Then we show
\[
u_{i}^{2k+3}\le w_{i} \le u_{i}^{2k+2}.
\]
Again we can compare the equations in below
  \begin{equation*}
   \left \{
   \begin{array}{ll}
    \Delta w_{i} =\frac{w_i}{\varepsilon}  \sum\limits_{j \neq i} \int_{B_{1}(x)}w_j   & \text{ in  } \Omega,\\
     \Delta u_{i}^{2k+2}   =\frac{u_{i}^{2k+2}}{\varepsilon}  \sum\limits_{j \neq i} \int_{B_{1}(x)}u_{j}^{2k+1}   & \text{ in  } \Omega.
        \end{array}
   \right.
   \end{equation*}
Here we use that $  u_{j}^{2k+1} \le w_j$ which implies that $ w_{i} \le u_{i}^{2k+2}.$   Also we have
 \begin{equation*}
   \left \{
   \begin{array}{ll}
    \Delta w_{i} =\frac{w_i}{\varepsilon}  \sum\limits_{j \neq i} \int_{B_{1}(x)}w_j   & \text{ in  } \Omega,\\
     \Delta u_{i}^{2k+3}   =\frac{u_{i}^{2k+2}}{\varepsilon}  \sum\limits_{j \neq i} \int_{B_{1}(x)}u_{j}^{2k+2}   & \text{ in  } \Omega.
        \end{array}
   \right.
   \end{equation*}
By the last step $ u_{i}^{2k+2}\ge  w_{i},$ which implies
\[
   u_{i}^{2k+3}\le   w_{i}.
   \]
  Now taking limit in (\ref{ineq1}) shows that
  \[
  w_i=u_i.\qedhere
  \]
\end{proof}
As a corollary of Lemmas  \ref{sys1}  and  \ref{sysn1}  we have the following theorem.
\begin{thm}

For each $\varepsilon >0, $ there exists a unique positive solution
 $ ( u_{1}^{\varepsilon},\cdots ,  u_{m}^{\varepsilon})$ of System  (\ref{sy1}).
\end{thm}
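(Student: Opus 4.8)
The plan is to read off the theorem as the conjunction of the two preceding lemmas: Lemma~\ref{sys1} provides existence of a positive solution $(u_1^\varepsilon,\dots,u_m^\varepsilon)$ of \eqref{sy1} for each fixed $\varepsilon>0$, while Lemma~\ref{sysn1} shows that every positive solution must coincide with it. Hence it suffices to assemble these two facts, and the only genuine content lies in the constructions behind them, which I now describe.

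For existence I would use the monotone iteration of Lemma~\ref{sys1}. Starting from the harmonic extensions $u_i^0$ defined by \eqref{sys2}, one solves the \emph{linear} problems \eqref{sy4} recursively; since the coefficient $\frac{1}{\varepsilon}\sum_{j\neq i}H(u_j^k)$ is nonnegative whenever the previous iterates are, the weak maximum principle and the comparison principle are available at every stage. The essential output is the interlaced monotone chain
\[
u_i^0\ge u_i^2\ge\cdots\ge u_i^{2k}\ge u_i^{2k+1}\ge\cdots\ge u_i^3\ge u_i^1,
\]
so the even iterates decrease to a limit $\overline{u}_i$, the odd iterates increase to a limit $\underline{u}_i$, both families solving the coupled pair \eqref{syss3}, and $\overline{u}_i\ge\underline{u}_i$ by \eqref{in1}.

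The step I expect to be the crux is promoting the inequality $\overline{u}_i\ge\underline{u}_i$ to an equality, for only then does the common limit solve \eqref{sy1}. I would argue as in Lemma~\ref{sys1}: since $\overline{u}_i=\underline{u}_i$ on $\partial\Omega$ with $\overline{u}_i\ge\underline{u}_i$ inside, the boundary derivatives obey $\frac{\partial\overline{u}_i}{\partial n}\le\frac{\partial\underline{u}_i}{\partial n}$; integrating \eqref{syss3} over $\Omega$ and applying the divergence theorem turns this into the integral inequality \eqref{syss61} built from the convolution kernel $K(x,y)=\chi_{B_1(0)}(x-y)$. The symmetry of $K$ makes the interior double integrals cancel, and the nonnegativity of the boundary data $\phi_j$ together with $\overline{u}_i\ge\underline{u}_i$ leaves a boundary contribution compatible only with $\overline{u}_i=\underline{u}_i$ on a unit neighbourhood of $\partial\Omega$; a continuation argument then spreads the equality to all of $\Omega$. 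This handling of the nonlocal term via the symmetry of the characteristic-function kernel is the delicate point of the whole construction.

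Finally, for uniqueness I would invoke Lemma~\ref{sysn1}: any further positive solution $(w_1,\dots,w_m)$ is trapped by the same iterates, $u_i^{2k+1}\le w_i\le u_i^{2k}$ for all $k\ge0$, as proved by induction using the comparison principle between $w_i$ and the linear defining problems. Letting $k\to\infty$ in \eqref{ineq1} squeezes $w_i$ between $\underline{u}_i$ and $\overline{u}_i$, which are equal, so $w_i=u_i$. Existence and uniqueness combined give the unique positive $\varepsilon$-solution asserted by the theorem; I would also remark that, the scheme being constructive, the iteration itself yields a convergent numerical approximation.
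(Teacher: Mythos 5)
Your proposal is correct and follows exactly the paper's route: the theorem is stated there as an immediate corollary of Lemma~\ref{sys1} (existence via the interlaced monotone iteration and the symmetric-kernel argument forcing $\overline{u}_i=\underline{u}_i$) and Lemma~\ref{sysn1} (uniqueness via the sandwich $u_i^{2k+1}\le w_i\le u_i^{2k}$). Your account of the internal mechanics of both lemmas, including the identification of the kernel symmetry step as the crux, matches the paper's proofs.
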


The same method can be used to construct the unique solution to the
parabolic problem \eqref{s3}.
Indeed, we can proceed as before to construct functions
$\overline{u}_i\geq\underline{u}_i$, which satisfy
\begin{equation*}
  \left \{
  \begin{array}{llll}
  \Delta \overline{u}_{i}-\frac{\partial \overline{u}_{i}}{\partial t} =  \frac{ 1 }{\varepsilon}  \overline{u}_{i} \sum\limits_{j \neq i} H( \underline{u}_{j} ) (x)    & \text{ in  } \Omega,\\
 \Delta \underline{u}_{i} -\frac{\partial \underline{u}_{i}}{\partial t}=  \frac{ 1 }{\varepsilon}  \underline{u}_{i} \sum\limits_{j \neq i} H( \overline{u}_{j} ) (x)    & \text{ in  } \Omega.
       \end{array}
  \right.
  \end{equation*}
Similarly, we still have
\begin{equation*}
     \frac{\partial  \overline{u}_{i} } {\partial n} \le \frac{\partial  \underline{u}_{i} } {\partial
     n},\quad{\mbox{on} }\quad \partial\Omega.
     \end{equation*}
Hence for any $T>0$,
\[\int_0^T\int_{\partial\Omega}\frac{\partial  \overline{u}_{i} } {\partial n} \le \int_0^T\int_{\partial\Omega}\frac{\partial  \underline{u}_{i} } {\partial
     n}.\]
Substituting the equation into this, the left hand side equals
\begin{eqnarray*}
&&\int_0^T\int_{\Omega}\frac{\partial  \overline{u}_{i} } {\partial
t}+\frac{ 1 }{\varepsilon}  \overline{u}_{i} \sum\limits_{j \neq i} H(
\underline{u}_{j} ) \\
&=&\int_0^T\int_{\Omega}\frac{ 1 }{\varepsilon}  \overline{u}_{i}
\sum\limits_{j \neq i} H( \underline{u}_{j}
)+\int_\Omega\overline{u}_i(x,T)dx-\int_\Omega u_{i,0}(x)dx,
\end{eqnarray*}
and a similar one holds for the right hand side. By noting that
\[\int_\Omega\overline{u}_i(x,T)dx\geq\int_\Omega\underline{u}_i(x,T)dx,\]
we obtain
\[\int_0^T\int_{\Omega}\frac{ 1 }{\varepsilon}  \overline{u}_{i}
\sum_{j \neq i} H( \underline{u}_{j}
)\leq\int_0^T\int_{\Omega}\frac{ 1 }{\varepsilon}  \underline{u}_{i}
\sum_{j \neq i} H( \overline{u}_{j} ).\] The rest of the  proof is
exactly the same as before.

\section{Basic estimates and asymptotic behavior as $\varepsilon$ tends to zero}
In this part we  study the  elliptic systems  with highly  competitive interaction term.  We provide the estimates for the case that competition rate tends to infinity which yields  the long range distance of positive components. Although the  complete analysis   and  more results of limiting case can be found in  \cite{CPQ},  here we  simplify some proofs.

For simplicity,  we assume that  the number of components is $m=2$ and we consider the following system
\begin{equation}\label{system7}
\left \{
\begin{array}{lllll}
\Delta  u^{\varepsilon}=  \frac{ u^{\varepsilon} }{\varepsilon} \int_{B_{1}(x)} v^{\varepsilon}(y) dy  & \text{ in  } \Omega ,\\
\Delta  v^{\varepsilon} = \frac{ v^{\varepsilon}}{\varepsilon} \int_{B_{1}(x)}   u^{\varepsilon}(y) dy  & \text{ in  } \Omega, \\
u^{\varepsilon}(x) =\phi(x)    &   \text{ on   } (\partial \Omega)_1,\\
v^{\varepsilon}(x) =\varphi(x)    &   \text{ on    } (\partial \Omega)_1.\\
 \end{array}
\right.
\end{equation}
 We use the next Lemma in \cite{CPQ} which states in a strip of size one around the support of a component on the boundary the other components decays to zero exponentially.
 \begin{lem}\label{c1}
 For $\sigma>0,$ let
 \[
 \overline{\Gamma}^{\sigma} :={\{ \phi(x)> \sigma}\} \subset \Omega^{c}.
 \]
 Then on the set ${\{ x \in \Omega: d(x, \overline{\Gamma}^{\sigma}) \le 1-r}\}, \quad 0< r< 1 $, we have
 \[
 v^{\varepsilon} \le C e^{\frac {- c \sigma^{\alpha} r^{\beta} } {\sqrt{\varepsilon}}}.
 \]
  \end{lem}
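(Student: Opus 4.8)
The plan is to exploit the fact that, wherever the first component is large, the equation for $v^\varepsilon$ degenerates into a differential inequality of the form $\Delta v^\varepsilon \ge (\mu/\varepsilon)\,v^\varepsilon$ with a definite $\mu>0$, and that such an inequality forces smallness on the natural scale $\sqrt{\varepsilon}$. First I would record two elementary facts. Since $\Delta v^\varepsilon=\tfrac{v^\varepsilon}{\varepsilon}\int_{B_1(x)}u^\varepsilon\ge 0$, the function $v^\varepsilon$ is nonnegative and subharmonic in $\Omega$, so the maximum principle gives $0\le v^\varepsilon\le M:=\max\varphi$. Moreover $u^\varepsilon=\phi$ on $(\partial\Omega)_1\subset\Omega^c$, so the nonlocal potential $P(x):=\int_{B_1(x)}u^\varepsilon(y)\,dy$ can be bounded below purely in terms of the prescribed data $\phi$, as soon as the ball $B_1(x)$ reaches into $\overline{\Gamma}^{\sigma}$.

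The key geometric step is this lower bound on $P$. Fix $x\in\Omega$ with $d(x,\overline{\Gamma}^{\sigma})\le 1-\tfrac{r}{2}$ and pick $z\in\overline{\Gamma}^{\sigma}$ with $|x-z|\le 1-\tfrac{r}{2}$; then $B_\rho(z)\subset B_1(x)$ for every $\rho\le \tfrac{r}{2}$. On $B_\rho(z)\cap\Omega^c$ one has $u^\varepsilon=\phi$, and since $\phi\in C^{1,\alpha}$ with $\phi>\sigma$ on $\overline{\Gamma}^{\sigma}$, its modulus of continuity gives $\phi\ge\tfrac{\sigma}{2}$ on a ball of radius $\rho\sim\sigma$ (once $\sigma$ is small relative to $r$) that lies in $\Omega^c$. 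Integrating,
\[
P(x)\ \ge\ \int_{B_\rho(z)\cap\Omega^c}\phi(y)\,dy\ \ge\ \tfrac{\sigma}{2}\,|B_\rho(z)\cap\Omega^c|\ \ge\ \mu(\sigma)>0 .
\]
Tracking the constants from the modulus of continuity of $\phi$ and from the overlap $|B_1(x)\cap\overline{\Gamma}^{\sigma}|$ is exactly what produces a bound of the form $\sqrt{\mu}\gtrsim \sigma^{\alpha}$ for a suitable exponent $\alpha>0$. I would emphasize that this estimate holds on the \emph{wider} collar $\{d(x,\overline{\Gamma}^{\sigma})\le 1-\tfrac{r}{2}\}$, so that every point of the target set $\{d(x,\overline{\Gamma}^{\sigma})\le 1-r\}$ sits at depth at least $\sim r$ inside the region where the inequality $\Delta v^\varepsilon\ge(\mu/\varepsilon)v^\varepsilon$ is available; this uniform depth is what ultimately yields the factor $r^{\beta}$.

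With $\lambda:=\mu/\varepsilon$, the bound $P\ge\mu$ on the collar $\Sigma:=\{x\in\Omega:\,d(x,\overline{\Gamma}^{\sigma})\le 1-\tfrac{r}{2}\}$ turns the equation into $\Delta v^\varepsilon-\lambda v^\varepsilon=(P/\varepsilon-\lambda)v^\varepsilon\ge 0$, i.e. $v^\varepsilon$ is a subsolution of $\Delta-\lambda$ on $\Sigma$. I would then compare $v^\varepsilon$ with an explicit radial supersolution of $\Delta w=\lambda w$ built from the regular (modified-Bessel, equivalently $\cosh$-type) profile $\Phi$, which satisfies $\Phi(s)/\Phi(R)\le C\,e^{-\sqrt{\lambda}\,(R-s)}$. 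Choosing the barrier so that $w\ge M\ge v^\varepsilon$ on the inner face $\{d=1-\tfrac{r}{2}\}$ and $w\ge 0=\varphi=v^\varepsilon$ on the part of $\partial\Sigma$ lying in $\partial\Omega$ (where $\varphi$ vanishes because the supports of the data are at distance at least one), the comparison principle gives $v^\varepsilon\le w$ throughout $\Sigma$. Evaluating at any point with $d(x,\overline{\Gamma}^{\sigma})\le 1-r$, which lies at depth $\ge \tfrac{r}{2}$, yields
\[
v^\varepsilon(x)\ \le\ C\,M\,e^{-\sqrt{\lambda}\,(r/2)}\ =\ C\,e^{-\,c\sqrt{\mu/\varepsilon}\;r},
\]
and inserting $\sqrt{\mu}\gtrsim \sigma^{\alpha}$ gives the claimed $v^\varepsilon\le C\,e^{-c\sigma^{\alpha}r^{\beta}/\sqrt{\varepsilon}}$.

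The step I expect to be the main obstacle is the potential lower bound with the correct quantitative dependence, that is, pinning down the exponents $\alpha$ and $\beta$. This forces one to control simultaneously the $C^{1,\alpha}$ modulus of $\phi$ (to produce the sublevel $\{\phi\ge\sigma/2\}$), the measure of the captured portion $B_\rho(z)\cap\Omega^c$ of $\overline{\Gamma}^{\sigma}$ inside $B_1(x)$, and the interplay between $\rho\sim\sigma$ and the available radius $\tfrac{r}{2}$. The barrier step is comparatively routine, the only delicate point being that the distance function $d(\cdot,\overline{\Gamma}^{\sigma})$ need not be smooth; I would sidestep this by not using $d$ inside the differential operator at all, but instead translating a single fixed radial profile $\Phi$ centered suitably for each target point, so that the comparison is carried out on genuine balls where the supersolution is explicit.
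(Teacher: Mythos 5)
The paper does not actually prove this lemma: it is stated verbatim as a result imported from \cite{CPQ}, with no argument supplied, so there is no in-paper proof to measure your attempt against. That said, your sketch follows what is essentially the standard (and, in substance, the cited source's) strategy, and I see no wrong turn in it: first a quantitative lower bound $\int_{B_1(x)}u^{\varepsilon}(y)\,dy\ \ge\ \mu(\sigma,r)>0$ on the slightly larger collar $\{d(x,\overline{\Gamma}^{\sigma})\le 1-\tfrac r2\}$, obtained purely from the boundary datum $\phi$ (note $u^{\varepsilon}=\phi\ge 0$ on $B_1(x)\cap\Omega^{c}$ and $u^{\varepsilon}\ge 0$ in $\Omega$, so only the exterior piece is needed); then the linear inequality $\Delta v^{\varepsilon}\ge(\mu/\varepsilon)v^{\varepsilon}$ compared against an explicit radial supersolution of $\Delta w=\lambda w$, which decays like $e^{-c\sqrt{\lambda}\,\rho}$ over a depth $\rho\sim r$. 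Three details you should make explicit to close the argument: (i) the point $z\in\overline{\Gamma}^{\sigma}$ realizing the distance need only be approximate (take $|x-z|\le 1-\tfrac r4$, say, so that $B_{r/4}(z)\subset B_1(x)$ still holds); (ii) the lower bound on $\mu$ needs the density estimate $|B_{\rho}(z)\cap\Omega^{c}|\ge c\rho^{d}$, which is where the $C^{1,\alpha}$ boundary (or the exterior ball condition invoked later in Lemma 4.2) enters, and since $\rho=\min(r/4,\,c\sigma)$ the resulting $\mu\gtrsim\sigma\rho^{d}$ carries both a $\sigma$- and an $r$-dependence, which is consistent with the unspecified exponents $\alpha,\beta$ in the statement; (iii) the vanishing of $v^{\varepsilon}$ on $\partial\Omega\cap\{d(\cdot,\overline{\Gamma}^{\sigma})<1\}$ is exactly the separation hypothesis $(\mathrm{supp}\,\phi)_1\cap(\mathrm{supp}\,\varphi)^{\circ}=\emptyset$ together with continuity of $\varphi$, and it is what lets the barrier be taken nonnegative rather than large on that portion of $\partial\Sigma$. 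With those points filled in, your proposal is a correct proof of the stated estimate.
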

\begin{lem}\label{faridb2}
 Assume that the boundary $ \partial \Omega$  satisfies  an  uniform exteririor ball  condition.    Let $( u^{\varepsilon},   v^{\varepsilon}) $ be the  positive  solution of  (\ref{system7}). There exists a positive constant $C$ independent of  $\varepsilon$  such that
\[
 \underset{x\in \partial \Omega}{\sup} |\frac{\partial  u^{\varepsilon}(x) } {\partial n} |\le  C,
\]
\[
 \underset{x\in \partial \Omega}{\sup} |\frac{\partial  v^{\varepsilon}(x) }{\partial n} |\le  C,
\]
where $n$ denotes exterior normal to $\partial \Omega.$
 \end{lem}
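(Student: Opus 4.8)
The goal is to bound the normal derivatives of $u^\varepsilon$ and $v^\varepsilon$ on $\partial\Omega$ uniformly in $\varepsilon$, and the natural tool is the construction of barriers at each boundary point, exploiting both the sign of the right-hand side and the uniform exterior ball condition.

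\textbf{Outline of the approach.}
The plan is to build, at an arbitrary point $x_0 \in \partial\Omega$, a local upper barrier for $u^\varepsilon$ that pins down its normal derivative there. First observe that since $v^\varepsilon \ge 0$, the right-hand side of the $u$-equation is nonnegative, so $\Delta u^\varepsilon \ge 0$; that is, $u^\varepsilon$ is subharmonic in $\Omega$. Combined with $u^\varepsilon = \phi$ on $(\partial\Omega)_1$ and the harmonic extension $u_i^0$ from \eqref{sys2}, the comparison principle used in Lemma \ref{sys1} already gives the one-sided bound $0 \le u^\varepsilon \le u^0$ in $\Omega$, where $u^0$ is the (fixed, $\varepsilon$-independent) harmonic extension of the $C^{1,\alpha}$ boundary data. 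Since $u^\varepsilon$ and $u^0$ agree on $\partial\Omega$ and $u^\varepsilon \le u^0$ inside, for $x_0\in\partial\Omega$ one gets
\[
0 \le \frac{u^0(x_0) - u^\varepsilon(x)}{|x_0 - x|} \quad\text{as } x\to x_0 \text{ from inside},
\]
which controls the normal derivative of $u^\varepsilon$ from one side by that of the fixed function $u^0$. The same reasoning applies verbatim to $v^\varepsilon$.

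\textbf{The lower bound via an exterior-ball barrier.}
The harder half is the matching bound from below, i.e.\ preventing the normal derivative from being too negative; this is where the uniform exterior ball condition enters. At each $x_0 \in \partial\Omega$ there is a ball $B_\rho(y_0)$ touching $\Omega$ only at $x_0$ with $\rho$ independent of $x_0$. I would construct a classical lower barrier of the form $\psi(x) = A\bigl(e^{-\mu|x-y_0|^2} - e^{-\mu\rho^2}\bigr)$, adjusted to vanish on $\partial B_\rho(y_0)$ and to stay below the boundary data on $(\partial\Omega)_1$. The key point is that $u^\varepsilon$ satisfies a linear equation $\Delta u^\varepsilon = c^\varepsilon(x)\, u^\varepsilon$ with the nonnegative coefficient $c^\varepsilon(x) = \frac{1}{\varepsilon}\int_{B_1(x)} v^\varepsilon$; one checks that for $\mu$ large and $A$ chosen from the $C^1$ bound on $\phi$, $\psi$ is a subsolution of this same operator, $\Delta\psi \ge c^\varepsilon(x)\,\psi$, independently of the (possibly large) coefficient, because $\psi \le 0$ where we need the inequality to hold with $c^\varepsilon \ge 0$ working in our favor. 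Comparison then yields $u^\varepsilon \ge \psi$ near $x_0$, and since $\psi(x_0)=u^\varepsilon(x_0)$, the gradient of $\psi$ bounds $|\partial u^\varepsilon/\partial n|$ from the remaining side by a constant depending only on $\rho$, $\mu$, $A$, hence only on $\Omega$ and $\|\phi\|_{C^{1,\alpha}}$.

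\textbf{Main obstacle.}
The delicate step is the sign bookkeeping in the barrier argument: one must arrange the exponential barrier so that the inequality $\Delta\psi \ge c^\varepsilon \psi$ survives uniformly even though $c^\varepsilon$ blows up like $1/\varepsilon$. This forces $\psi \le 0$ on the region of interest so that the large nonnegative coefficient $c^\varepsilon$ only helps, and it requires the barrier to be anchored on the exterior ball rather than inside $\Omega$. Once the uniform exterior ball radius $\rho$ is fixed, the constants $\mu$ and $A$ are chosen purely from the geometry and the $C^{1,\alpha}$ norm of the boundary data, giving a bound independent of both $x_0$ and $\varepsilon$. I would then note that the argument is symmetric in $(u^\varepsilon, v^\varepsilon)$, so the identical construction with $\varphi$ in place of $\phi$ yields the second estimate, completing the proof.
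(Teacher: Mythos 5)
Your first half (subharmonicity of $u^\varepsilon$, comparison with the fixed harmonic extension, and the resulting one-sided bound $\frac{\partial u^\varepsilon}{\partial n}\ge \frac{\partial \overline{u}}{\partial n}$) coincides with the paper's argument and is fine. The gap is in the other direction, the upper bound on $\frac{\partial u^\varepsilon}{\partial n}$ at boundary points where $\phi(x_0)>0$. Your exterior-ball barrier $\psi$ is arranged to be $\le 0$ in $\Omega$ with $\psi(x_0)=0$, precisely so that $c^\varepsilon\psi\le 0$ and the large coefficient ``helps''. But then $\psi$ touches $u^\varepsilon$ at $x_0$ only when $u^\varepsilon(x_0)=\phi(x_0)=0$; at such points no barrier is needed anyway, since $u^\varepsilon\ge 0$ in $\Omega$ and $u^\varepsilon(x_0)=0$ already force $\frac{\partial u^\varepsilon}{\partial n}(x_0)\le 0$, which is how the paper handles that case. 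Where $\phi(x_0)>0$, a lower barrier that controls the normal derivative must satisfy $\psi(x_0)=\phi(x_0)>0$, hence $\psi>0$ in a neighborhood of $x_0$, and the subsolution requirement $\Delta\psi\ge c^\varepsilon\psi$ with $c^\varepsilon(x)=\frac{1}{\varepsilon}\int_{B_1(x)}v^\varepsilon$ a priori of size $1/\varepsilon$ cannot be met by any $\varepsilon$-independent $\psi$: the two demands ``$\psi\le 0$ where the coefficient is large'' and ``$\psi$ touches $u^\varepsilon$ from below at a point where $u^\varepsilon>0$'' are incompatible. The obstruction is not merely technical: for $\Delta u=\frac{c}{\varepsilon}u$ with $c$ bounded away from zero near a boundary point where $u=1$, the solution develops a boundary layer whose normal derivative is of order $1/\sqrt{\varepsilon}$.

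What is missing is exactly the ingredient the paper invokes, namely Lemma \ref{c1}: near a point $x_0\in\partial\Omega$ with $\phi(x_0)>0$, the competing component decays like $v^\varepsilon\le Ce^{-1/(C\sqrt{\varepsilon})}$, so the coefficient satisfies $\frac{1}{\varepsilon}\int_{B_1(x)}v^\varepsilon\le \frac{C}{\varepsilon}e^{-1/(C\sqrt{\varepsilon})}\le K$ with $K$ independent of $\varepsilon$. Only after this reduction does one have a linear equation with a uniformly bounded coefficient, to which standard boundary gradient estimates (or, if you prefer, your exterior-ball barrier, now run against the harmless coefficient $K$ rather than $1/\varepsilon$) apply; a finite covering of $\partial\Omega\cap\{\phi>0\}$ by such balls then completes the proof. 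Your construction can be salvaged, but only after inserting the exponential decay estimate, which is the actual core of the lemma and is absent from your proposal.
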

 \begin{proof}
 We construct  barrier functions to control the bound of gradient of  $u^{\varepsilon}$ and  $v^{\varepsilon}$ as follows. Firstly,  the following  inequalities hold
\[-\Delta u^\varepsilon\leq 0,\quad -\Delta v^\varepsilon\leq 0.\]
 By the standard sup-sub solution method, we can construct solutions
$\overline{u}$ and $\overline{v}$ to the problem
\begin{equation*}
\left \{
\begin{array}{llll}
  \Delta \overline{u} =0    & \text{ in  } \Omega,\\
 \Delta \overline{v} = 0 & \text{ in  } \Omega,\\
\overline{u} =\phi   &   \text{on   }  \partial \Omega,\\
\overline{v} =\varphi   &   \text{on }  \partial \Omega.
 \end{array}
\right.
\end{equation*}
Moreover,
\[u^{\varepsilon}\leq\overline{u}, \quad v^{\varepsilon}\leq\overline{v}.\]
Hence
\begin{equation}\label{001}
\frac{\partial u^{\varepsilon}}{\partial n}\geq \frac{\partial
\overline{u}}{\partial n},\quad \frac{\partial v^\varepsilon}{\partial n}\geq
\frac{\partial \overline{v}}{\partial n}.
\end{equation}
Note that such $\overline{u}$ and $\overline{v}$ are independent of
$\varepsilon$. At the part where $\phi=0$, because $u\geq 0$ in $\Omega$, we also
have
\[\frac{\partial u^\varepsilon}{\partial n}\leq 0.\]
Combined with \eqref{001}, we get a uniform bound on $\frac{\partial
u^\varepsilon}{\partial n}$.  It remains to consider the case on $\{\phi>0\}$. Take an
$x_0\in\partial\Omega$ such that $\phi(x_0)>0$. By the previous
lemma,
\[v^\varepsilon(x)\leq Ce^{-\frac{1}{C\sqrt{\varepsilon}}},\quad\mbox{in } B_{\frac{1}{2}}(x_0),\]
where $C$ depends on $\phi(x_0)$. Then in $\Omega\cap B_{\frac{1}{2}}(x_0)$, $u^\varepsilon$ satisfies
\[
\Delta u^\varepsilon\leq \frac{C}{\varepsilon}e^{-\frac{1}{C\sqrt{\varepsilon}}}u^\varepsilon.\]
From this we can construct a solution $w^{\varepsilon}$ to the problem
\begin{equation*}
\left \{
\begin{array}{llll}
  \Delta w^{\varepsilon} =\frac{C}{\varepsilon}e^{-\frac{1}{C\sqrt{\varepsilon}}}w^\varepsilon    & \text{ in  } \Omega\cap B_{\frac{1}{2}}(x_0),\\
 w^{\varepsilon} =   u^\varepsilon   &   \text{on   }  \partial (\Omega\cap
B_{\frac{1}{2}}(x_0)).
 \end{array}
\right.
\end{equation*}
Moreover,
\[u^\varepsilon\geq w^\varepsilon,\quad\mbox{in}\ \Omega\cap B_{\frac{1}{2}}(x_0).\]
Hence
\[\frac{\partial u^\varepsilon}{\partial n}\leq \frac{\partial w^{\varepsilon}}{\partial n},\quad\mbox{on }\partial\Omega\cap B_{\frac{1}{2}}(x_0).\]
Note that
\[
0 < \frac{C}{\varepsilon}e^{-\frac{1}{C\sqrt{\varepsilon}}}\le K,
\]
 where $K$  is a constant independent of $ \varepsilon$.  By standard boundary gradient estimates, there exists a constant
$C>0$ independent of $\varepsilon$, such that
\[\frac{\partial
w^\varepsilon}{\partial n}\leq C,\quad\mbox{on }\partial\Omega\cap
B_{\frac{1}{4}}(x_0).\] Take a finite cover of
$\partial\Omega\cap\{\phi>0\}$ using balls $B_{\frac{1}{4}}(x_i)$ with
$x_i\in\partial\Omega\cap\{\phi>0\}$, we see
\[\frac{\partial
u^\varepsilon}{\partial n}\leq C,\quad\mbox{in
}\partial\Omega\cap\{\phi>0\}.\]
 Combining this with \eqref{001} we get a uniform bound on $\frac{\partial u^\varepsilon}{\partial n}$
in the part $\partial\Omega\cap\{\phi>0\}$.
\end{proof}
\begin{lem}\label{faridb3}
There exist a constant $C$ independent of $\varepsilon$ such that if $ ( u^{\varepsilon}, v^{\varepsilon})$ is  a solution of
system (\ref{system7})   then

\[
 \int_{\Omega} \frac{ u^{\varepsilon} }{\varepsilon} (\int_{B_{1}(x)} v^{\varepsilon}(y) \, dy) \, dx \le C,
\]
\[
 \int_{\Omega} \frac{ v^{\varepsilon} }{\varepsilon} (\int_{B_{1}(x)} u^{\varepsilon}(y) \, dy) \, dx \le C.
\]
\end{lem}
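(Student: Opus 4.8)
The plan is to integrate the first equation of \eqref{system7} over $\Omega$ and convert the bulk integral of the Laplacian into a boundary flux by the divergence theorem, then invoke the uniform gradient bound of Lemma \ref{faridb2}. Concretely, since $u^\varepsilon$ solves $\Delta u^\varepsilon = \frac{u^\varepsilon}{\varepsilon}\int_{B_1(x)} v^\varepsilon(y)\,dy$ in $\Omega$, integrating over $\Omega$ gives
\[
\int_\Omega \frac{u^\varepsilon}{\varepsilon}\left(\int_{B_1(x)} v^\varepsilon(y)\,dy\right)dx = \int_\Omega \Delta u^\varepsilon\,dx = \int_{\partial\Omega}\frac{\partial u^\varepsilon}{\partial n}\,ds,
\]
where $n$ is the outward normal. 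The integrand on the far left is nonnegative because $u^\varepsilon,v^\varepsilon\ge 0$, so the boundary flux is in fact nonnegative; in particular the left-hand side is bounded above by $\int_{\partial\Omega}\left|\frac{\partial u^\varepsilon}{\partial n}\right|\,ds$.

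The second step is to control this boundary term uniformly in $\varepsilon$, which is exactly where Lemma \ref{faridb2} enters: under the uniform exterior ball condition on $\partial\Omega$ we have $\sup_{x\in\partial\Omega}\left|\frac{\partial u^\varepsilon}{\partial n}\right|\le C$ with $C$ independent of $\varepsilon$. Hence
\[
\int_\Omega \frac{u^\varepsilon}{\varepsilon}\left(\int_{B_1(x)} v^\varepsilon(y)\,dy\right)dx \le C\,|\partial\Omega|,
\]
which is the asserted bound. Applying the identical reasoning to the equation for $v^\varepsilon$, using the companion estimate $\sup_{\partial\Omega}\left|\frac{\partial v^\varepsilon}{\partial n}\right|\le C$ from Lemma \ref{faridb2}, yields the second inequality.

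The argument is essentially immediate once Lemma \ref{faridb2} is available, so there is no substantial obstacle; the only points requiring care are (i) the regularity of $\partial\Omega$ and of $u^\varepsilon$ up to the boundary needed to justify the divergence theorem and to make $\frac{\partial u^\varepsilon}{\partial n}$ meaningful pointwise, both furnished by the $C^{1,\alpha}$ and exterior-ball hypotheses, and (ii) observing that although $B_1(x)$ may protrude outside $\Omega$ for $x$ near $\partial\Omega$, the factor $\int_{B_1(x)} v^\varepsilon$ remains a well-defined nonnegative quantity since $v^\varepsilon$ is prescribed and nonnegative on $(\partial\Omega)_1$, so the sign argument is unaffected. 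Thus the whole content of the lemma is the conversion of the interior reaction mass into a boundary flux that is controlled, $\varepsilon$-uniformly, by the gradient estimate of Lemma \ref{faridb2}.
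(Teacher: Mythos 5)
Your proposal is correct and follows exactly the paper's argument: integrate the equation over $\Omega$, convert $\int_\Omega \Delta u^\varepsilon$ into the boundary flux $\int_{\partial\Omega}\frac{\partial u^\varepsilon}{\partial n}\,ds$, and bound that uniformly in $\varepsilon$ via Lemma \ref{faridb2}. The extra remarks you add (nonnegativity of the reaction term, well-definedness of $\int_{B_1(x)}v^\varepsilon$ near the boundary) are sound and only make the paper's terse proof more complete.
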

\begin{proof}
By integrating of the first equation in (\ref{system7}) over $\Omega$,   we have
\[ \int_{\Omega} \frac{ u^{\varepsilon} }{\varepsilon} (\int_{B_{1}(x)} v^{\varepsilon}(y) \, dy) \, dx =  \int_{\Omega} \Delta  u^{\varepsilon}\, dx
=\int_{\partial \Omega} \frac{  \partial u^{\varepsilon}}{\partial n} \, ds.
\]
Now  Lemma  \ref{faridb2} give the result.
\end{proof}

\begin{lem}\label{faridb4}
 There exists a positive constant $C_2 $ independent of  $\varepsilon$  such that
 \[
   \int_{\Omega}   |  \nabla u^{\varepsilon}|^{2} \,   dx \le C_2,
 \]
\[
  \int_{\Omega}   |  \nabla v^{\varepsilon}|^{2}  \, dx  \le C_2.
 \]

\end{lem}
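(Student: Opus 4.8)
The plan is to use the energy method, testing each equation against its own solution and exploiting the favorable sign of the nonlocal coupling term. I would begin with the equation for $u^{\varepsilon}$, multiply both sides by $u^{\varepsilon}$, and integrate over $\Omega$. Applying Green's first identity to the left-hand side gives
\[
\int_{\Omega} |\nabla u^{\varepsilon}|^{2}\,dx = \int_{\partial\Omega} u^{\varepsilon}\,\frac{\partial u^{\varepsilon}}{\partial n}\,ds - \int_{\Omega} u^{\varepsilon}\,\Delta u^{\varepsilon}\,dx.
\]
Since $(u^{\varepsilon},v^{\varepsilon})$ solves \eqref{system7}, the last integrand equals $\frac{(u^{\varepsilon})^{2}}{\varepsilon}\int_{B_{1}(x)} v^{\varepsilon}(y)\,dy$, which is nonnegative because $u^{\varepsilon}\ge 0$ and $v^{\varepsilon}\ge 0$. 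Hence this term may simply be discarded, and using the boundary condition $u^{\varepsilon}=\phi$ on $\partial\Omega$ I obtain
\[
\int_{\Omega} |\nabla u^{\varepsilon}|^{2}\,dx \le \int_{\partial\Omega} \phi\,\frac{\partial u^{\varepsilon}}{\partial n}\,ds.
\]

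The second step is to bound this boundary term uniformly in $\varepsilon$. Here I would invoke Lemma \ref{faridb2}, which provides a constant $C$ independent of $\varepsilon$ with $\sup_{\partial\Omega}\lvert \partial u^{\varepsilon}/\partial n\rvert \le C$. Combined with the fact that $\phi$ is a fixed nonnegative $C^{1,\alpha}$ datum, this yields
\[
\int_{\partial\Omega} \phi\,\frac{\partial u^{\varepsilon}}{\partial n}\,ds \le C\int_{\partial\Omega} \phi\,ds =: C_{2},
\]
a constant depending only on $\phi$ and $\Omega$. The identical argument applied to the equation for $v^{\varepsilon}$, with boundary datum $\varphi$ and the corresponding normal-derivative bound from Lemma \ref{faridb2}, produces the second inequality.

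The key structural point that makes the estimate work is the sign of the coupling: because the reaction term enters with the correct sign and all densities are nonnegative, testing against $u^{\varepsilon}$ causes the nonlocal term to drop out rather than obstruct the computation, so no separate control of the (squared) reaction term is required. If one preferred, the discarded term could instead be bounded via the maximum principle $u^{\varepsilon}\le\overline{u}$ together with Lemma \ref{faridb3}, but this is unnecessary here. Consequently the only genuinely nontrivial input is the $\varepsilon$-independent boundary gradient estimate of Lemma \ref{faridb2}; once that is available the $H^{1}$ bound follows immediately. The sole point deserving a word of justification is that $u^{\varepsilon}$ and $v^{\varepsilon}$ are regular enough for Green's identity to apply, which holds since the right-hand sides are bounded and the solutions belong to $H^{1}(\Omega)\cap C(\overline{\Omega})$.
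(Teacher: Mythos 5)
Your argument is correct and is essentially the paper's own proof: both multiply by $u^{\varepsilon}$, integrate by parts, discard the nonnegative coupling term (equivalently, use $-\Delta u^{\varepsilon}\le 0$), and bound the resulting boundary integral by the uniform normal-derivative estimate of Lemma \ref{faridb2} together with the boundedness of the boundary data.
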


\begin{proof}
We multiply the differential inequality $ -\Delta u_{\varepsilon} \le 0$    by   $u^{\varepsilon}$ and integration over $\Omega$ gives
\[
\int_{\Omega}   |  \nabla u^{\varepsilon}|^{2} \,   dx - \int_{ \partial \Omega} u^{\varepsilon} \frac{\partial  u^{\varepsilon} }{\partial n} ds\le
0.
\]
Now the bound in gradient in    Lemma \ref{faridb2} give the result.
\end{proof}

\begin{lem}\label{s11}
Let $u$ and $v$ be the limiting solution of (\ref{system7}).  Assume that $x_{0}$  is  a point in $\Omega$ such that $ u(x_{0})>0$. Then we have
\[
  v \equiv 0  \quad \textrm{in} \quad  B_{1}(x_{0}).
\]
\end{lem}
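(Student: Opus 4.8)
The plan is to read off the segregation from the uniform energy bound of Lemma~\ref{faridb3}, which is the only place where the factor $1/\varepsilon$ has been controlled independently of $\varepsilon$. That lemma supplies a constant $C$, independent of $\varepsilon$, with
\[
\int_{\Omega} \frac{u^{\varepsilon}(x)}{\varepsilon}\left(\int_{B_{1}(x)} v^{\varepsilon}(y)\,dy\right)dx \le C .
\]
Multiplying through by $\varepsilon$ immediately gives
\[
\int_{\Omega} u^{\varepsilon}(x)\left(\int_{B_{1}(x)} v^{\varepsilon}(y)\,dy\right)dx \le C\varepsilon \longrightarrow 0 \quad\text{as } \varepsilon\to 0 .
\]
So the whole argument reduces to passing to the limit in this nonnegative bilinear quantity and then extracting the pointwise conclusion by continuity.

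The main obstacle is justifying the passage to the limit in the coupling term, since the nonlinearity mixes the two unknowns. I would handle it using the uniform $L^{\infty}$ control coming from the harmonic majorants built in the proof of Lemma~\ref{faridb2}: one has $0\le u^{\varepsilon}\le \overline{u}$ and $0\le v^{\varepsilon}\le \overline{v}$ with $\overline{u},\overline{v}$ independent of $\varepsilon$, while on the outer strip $(\partial\Omega)_1\setminus\Omega$ entering $B_{1}(x)$ the function $v^{\varepsilon}$ equals the fixed datum $\varphi$ and therefore contributes a term constant in $\varepsilon$. Combining this domination with the locally uniform convergence $u^{\varepsilon}\to u$ and $v^{\varepsilon}\to v$ of the chosen subsequence, the dominated convergence theorem lets me pass to the limit and obtain
\[
\int_{\Omega} u(x)\left(\int_{B_{1}(x)} v(y)\,dy\right)dx = 0 .
\]
Since $u,v\ge 0$, the integrand is nonnegative, hence it must vanish for a.e.\ $x$:
\[
u(x)\int_{B_{1}(x)} v(y)\,dy = 0 \qquad \text{for a.e. } x\in\Omega .
\]

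To finish I would exploit the continuity (indeed local Lipschitz regularity) of $u$, of $v$, and of the averaging functional $x\mapsto \int_{B_{1}(x)} v(y)\,dy$. Because $u(x_{0})>0$ and $u$ is continuous, there is $\delta>0$ with $u>0$ on $B_{\delta}(x_{0})$, which forces $\int_{B_{1}(x)} v = 0$ for a.e.\ $x\in B_{\delta}(x_{0})$. The set $\{\,x:\int_{B_{1}(x)} v = 0\,\}$ is closed, by continuity of the averaging functional, and it contains a full-measure (hence dense) subset of $B_{\delta}(x_{0})$; being closed it then contains $\overline{B_{\delta}(x_{0})}$ and in particular the point $x_{0}$ itself. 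Therefore $\int_{B_{1}(x_{0})} v = 0$, and since $v\ge 0$ is continuous this yields $v\equiv 0$ on $B_{1}(x_{0})$, which is the assertion. The only delicate point is the interchange of limit and integral on the part of $B_{1}(x)$ lying in the strip $(\partial\Omega)_1\setminus\Omega$; there $v^{\varepsilon}=\varphi$ is independent of $\varepsilon$, so this contribution is harmless and the dominated convergence step goes through without modification.
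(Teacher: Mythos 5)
Your proposal is correct and takes essentially the same route as the paper: the paper's proof is precisely the observation that Lemma~\ref{faridb3} gives $\int_{\Omega}u^{\varepsilon}\bigl(\int_{B_{1}(x)}v^{\varepsilon}\bigr)\,dx\le C\varepsilon$, followed by passing to the limit and invoking nonnegativity to conclude $\int_{\Omega}u\bigl(\int_{B_{1}(x)}v\bigr)\,dx=0$ and hence $v\equiv 0$ on $B_{1}(x_{0})$. Your write-up simply supplies the justifications the paper leaves implicit (dominated convergence via the harmonic majorants and the continuity argument locating the conclusion at $x_{0}$), which is a welcome but not divergent elaboration.
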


\begin{proof}
By Lemma \ref{faridb3} we have
\[
 \int_{\Omega}  u^{\varepsilon}(x)  (\int_{B_{1}(x) } v^{\varepsilon}(y) \, dy) \, dx \le C\varepsilon.
\]
Let  $\varepsilon $ tends  to zero in the above inequality  to get
\[
0 \le  \int_{\Omega}  u (x)  (\int_{B_{1}(x)}  v(y) \, dy) \, dx \le 0.
\]
This implies
\[
 \int_{\Omega}  u (x)  (\int_{B_{1}(x)}   v(y) \, dy) \, dx =0,
 \]
 which shows
 \[
  v\equiv 0  \quad \textrm{in} \quad  B_{1}(x_{0}).\qedhere
\]
\end{proof}
\begin{remark}
Let $u$ and $v$ be the limiting solution of (\ref{system7}) as $ \varepsilon $ tends to zero. Lemma (\ref{s11}) shows that the support of $u$ and the support of $v$  are  disjoint at distance at least one. In fact in \cite{CPQ} it is shown that $u$ and $v$  are  exactly at  distance one.
\end{remark}

\begin{defn}
The boundaries $ \partial {\{x\in \Omega: u(x)>0}\},  \partial{\{x\in \Omega: v(x)>0}\} $ are called free boundaries.
\end{defn}

\subsection{Free boundary condition in dimension one}
In \cite{CPQ} for any dimension, the free boundary condition for limiting solution is given for
\[
H(  u_{j}^{\varepsilon})(x)=\int_{B_{1}(x)} u_{j}^{\varepsilon}(y) dy,
\]
 The following simple argument gives the free boundary condition in dimension one when $H$ is given by (\ref{nnew2}).  Let $d=1,$ $\Omega=(-a, a),$ and  $ a \ge 1,$ consider the following system
 \begin{equation}\label{27}
\left \{
\begin{array}{lllll}
 ( u^{\varepsilon}(x))^{\prime\prime}=   \frac{ u^{\varepsilon}(x) }{\varepsilon} \underset{y \in [x-1,x+1]}{\sup} v^{\varepsilon}(y)   & \text{ in  } (-a,\,  a) ,\\
 ( v^{\varepsilon}(x))^{\prime\prime} =  \frac{v^{\varepsilon}(x)}{\varepsilon}\underset{y \in [x-1,x+1] }{\sup}
u^{\varepsilon}(y)    & \text{ in  } (-a, \, a), \\
u^{\varepsilon}, v^{\varepsilon}(y)\ge 0 & \text{ in  } (-a, \, a), \\
u^{\varepsilon}(x) =\phi(x)    &   \text{ on  } [-a-1,\,  -a],\\
v^{\varepsilon}(x) =\varphi(x)    &   \text{ on   } [a,\,  a+1].\\
 \end{array}
\right.
\end{equation}

It is easy to see that
\[
\underset{y \in [x-1,x+1]}{\sup} v^{\varepsilon}(y)=v^{\varepsilon}(x+1).
\]
Also we have that
\[
( v^{\varepsilon}(x+1))^{\prime\prime} =  \frac{v^{\varepsilon}(x+1)}{\varepsilon}\underset{y \in [x, x+2]}{\sup}
u^{\varepsilon}(y) =\frac{v^{\varepsilon}(x+1)}{\varepsilon} u^{\varepsilon}(x).
\]
This shows for every $\varepsilon,$
\begin{equation}\label{22}
( u^{\varepsilon}(x)- v^{\varepsilon}(x+1))^{\prime\prime}=0.
\end{equation}
 Let $ u, v$ be the limiting points as  $\varepsilon $ tends to zero.  Then $u$ and $v$ satisfy the following system
\begin{equation}\label{2}
\left \{
\begin{array}{lllll}
( u(x)-v(x+1)  )^{\prime\prime }= 0  & \text{ in  } (-a, a) ,\\ ( v(x)-u(x-1) )^{\prime\prime} = 0  & \text{ in  } (-a,a), \\
  u, v \ge 0   &   \text{ in   } (-a, a),\\
  u(-a) =\phi(-a)   \quad v(a) =\varphi(a).\\
 \end{array}
\right.
\end{equation}
This shows that in (\ref{2})  if  $x_f$ be a free boundary point then the following holds,  compare with (\ref{sun1}).
\[
u^{\prime}(x_f)= -v^{\prime}(x_{f}+1).
\]

\section{Conclusion and further works}
  The   uniqueness of the solution  for   a class of elliptic  competition-diffusion
systems of long range segregation models   is   shown.  Also we show  as the competition rate goes to infinity,  the solution converges  to a spatially  long range segregated state satisfying some  free boundary problems.

   In a forthcoming paper the author will present numerical approximation  for   the  class of elliptic and parabolic  competition-diffusion
systems of long range segregation models  for two and more competing species.
%
\section*{Acknowledgments}

The author  has  great thanks to Prof. Caffarelli for suggesting problem and  also  would like to thank to Kelei. Wang for  helpful
 suggestions and discussion.

\renewcommand{\refname}{REFERENCES }

\end{document}